\documentclass{birkjour}

\usepackage{amsfonts, amsthm, amssymb, amsmath, mathrsfs}
\usepackage[all]{xy}

\ifpdf
	\usepackage[unicode,pdftex]{hyperref}
	\input glyphtounicode
	\pdfgentounicode=1
\else
	\usepackage[unicode,dvipdfm]{hyperref}
\fi



\newtheorem{thm}{Theorem}[section]
\newtheorem{prop}[thm]{Proposition}
\newtheorem{lem}[thm]{Lemma}
\newtheorem{cor}[thm]{Corollary}

\theoremstyle{definition}

\newtheorem{rmk}[thm]{Remark}

\numberwithin{equation}{section}

\newcommand{\PP}{\ensuremath{\mathbb{P}}}

\begin{document}

\title[Irreducibility of the Hilbert scheme of curves in $\Bbb P^4$ of degree $g+2$]
 {Irreducibility of  the Hilbert scheme of smooth curves in $\Bbb P^4$ of degree $g+2$ and genus $g$}

\author[Changho Keem]{Changho Keem}

\address{
Department of Mathematics,
Seoul National University\\
Seoul 151-742,  
South Korea}

\email{ckeem1@gmail.com}

\thanks{Both authors were not  supported by National Research Foundation}

\author[Yun-Hwan Kim]{Yun-Hwan Kim}
\address{Department of Mathematics,
Seoul National University\\
Seoul 151-742, 
South Korea}
\email{yunttang@snu.ac.kr}

\subjclass{Primary 14H10; Secondary 14C05}

\keywords{Hilbert scheme, algebraic curves, linear series}

\date{\today}

\begin{abstract}
We denote by $\mathcal{H}_{d,g,r}$ the Hilbert scheme of smooth curves, which is the union of components whose general point corresponds to a smooth irreducible and non-degenerate curve of degree $d$ and genus $g$ in $\PP^r$. In this note, we show that any non-empty $\mathcal{H}_{g+2,g,4}$ is irreducible without any restriction on the genus $g$. 
Our result augments the irreducibility result obtained earlier by Hristo Iliev(2006), in which several low genus $g\le 10$ cases have been left untreated.
\end{abstract}

\maketitle

\section{\quad An overview, preliminaries and basic set-up}

Given non-negative integers $d$, $g$ and $r\ge 3$, let $\mathcal{H}_{d,g,r}$ be the Hilbert scheme of smooth curves parametrizing smooth irreducible and non-degenerate curves of degree $d$ and genus $g$ in $\PP^r$.

After Severi asserted that $\mathcal{H}_{d,g,r}$ is irreducible for $d\ge g+r$ in \cite[Anhang G, p. 368]{Sev} with an incomplete proof, the irreducibility of $\mathcal{H}_{d,g,r}$ has been studied by several authors. Most noteworthy result regarding the irreducibility of $\mathcal{H}_{d,g,r}$ is due to L. Ein,  who proved Severi's claim for  \& $r=4$ (\& for $r=3$ as well); cf. \cite[Theorem 7]{E2} \&  \cite[Theorem 4]{E1}, see also \cite[Theorem 1.5]{KK}, \cite[Proposition 2.1 \& Proposition 3.2]{KKL},  \cite[Theorem 3.1]{I}, \cite[Theorem 2.1]{KK1}
for a different proof and several extensions concerning the irreducibility of $\mathcal{H}_{d,g,3}$ in the range $d\ge g$ . 

For  smooth curves in $\mathbb{P}^4$, the irreducibility of $\mathcal{H}_{g+3,g,4}$ ($g\ge 5$) and  $\mathcal{H}_{g+2,g,4}$ ($g\ge 11$) has been pushed forward by Hristo Iliev \cite[Theorem 3.2]{I} 
beyond the range $d\ge g+4$ which has been known by a work of L. Ein \cite[Theorem 7]{E2}. However, the irreducibility of $\mathcal{H}_{g+2,g,4}$ (and the non-emptyness as well)  for lower genus $g\le 10$ has been left unsettled.  In this article, we show that $\mathcal{H}_{g+2,g,4}$ is non-empty and irreducible for $7\le g\le 10$ which  in turn implies (together with  \cite[Theorem 3.2(b)]{I}) that any non-empty $\mathcal{H}_{g+2,g,4}$ is irreducible  without any restriction on the genus $g$; cf. Theorem \ref{H_g+2,g,4}, Corollary \ref{cor} and Remark \ref{finalremark}.

\vskip 6pt
Before proceeding, we recall several related results which are rather well-known; cf. \cite{AC1} and \cite{AC2}.
Let $\mathcal{M}_g$ be the moduli space of smooth curves of genus $g$. For any given isomorphism class $[C] \in \mathcal{M}_g$ corresponding to a smooth irreducible curve $C$, there exist a neighborhood $U\subset \mathcal{M}_g$ of the class $[C]$ and a smooth connected variety $\mathcal{M}$ which is a finite ramified covering $h:\mathcal{M} \to U$, as well as  varieties $\mathcal{C}$, $\mathcal{W}^r_d$ and $\mathcal{G}^r_d$ proper over $\mathcal{M}$ with the following properties:
\begin{enumerate}
\item[(1)] $\xi:\mathcal{C}\to\mathcal{M}$ is a universal curve, i.e. for every $p\in \mathcal{M}$, $\xi^{-1}(p)$ is a smooth curve of genus $g$ whose isomorphism class is $h(p)$,
\item[(2)] $\mathcal{W}^r_d$ parametrizes the pairs $(p,L)$ where $L$ is a line bundle of degree $d$ and $h^0(L) \ge r+1$ on $\xi^{-1}(p)$,
\item[(3)] $\mathcal{G}^r_d$ parametrizes the couples $(p, \mathcal{D})$, where $\mathcal{D}$ is possibly an incomplete linear series of degree $d$ and dimension $r$ on $\xi^{-1}(p)$ - which is usually denoted by $g^r_d$. 
\end{enumerate}

\vskip 6pt
Let $\widetilde{\mathcal{G}}$ be the union of components of $\mathcal{G}^{r}_{d}$ whose general element $(p,\mathcal{D})$ corresponds to a very ample linear series $\mathcal{D}$ on the curve $C=\xi^{-1}(p)$. Note that an open subset of $\mathcal{H}_{d,g,r}$ consisting of points corresponding to smooth irreducible and non-degenerate curves is a $\mathbb{P}\textrm{GL}(r+1)$-bundle over an open subset of $\widetilde{\mathcal{G}}$. Hence the irreducibility of $\widetilde{\mathcal{G}}$ guarantees the irreducibility of $\mathcal{H}_{d,g,r}$. 
We also make a note of the following well-known facts regarding the scheme $\mathcal{G}^{r}_{d}$; cf.  \cite{AC1}, \cite[Chapt. 21, \S 3, 5, 6, 11,12]{ACGH2} and \cite[\S 2.a, p. 67]{H1}.
\begin{prop}\label{facts}
For non-negative integers $d$, $g$ and $r$, let $\rho(d,g,r):=g-(r+1)(g-d+r)$ be the Brill-Noether number.
	\begin{enumerate}
	\item[\rm{(1)}] The dimension of any component of $\mathcal{G}^{r}_{d}$ is at least $3g-3+\rho(d,g,r)$ which is denoted by $\lambda(d,g,r)$.
\item[\rm(2)] 
$\mathcal{G}^{1}_{d}$ is smooth and irreducible of dimension $\lambda(d,g,1)$ if $g>1, d\ge 2$ and $d\le g+1$.
	\end{enumerate}
\end{prop}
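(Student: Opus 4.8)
The plan is to treat the two assertions separately: (1) is the relative incarnation of the classical lower bound in Brill--Noether theory, while (2) reduces to the smoothness and irreducibility of the Hurwitz scheme of branched covers of $\PP^1$, spliced together with the dimension bound of (1).

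For (1), I would exploit the determinantal nature of the Brill--Noether loci over $\mathcal{M}$. Recall that $\mathcal{M}$ is smooth of dimension $3g-3$ and that the relative Picard scheme $\mathrm{Pic}^d_{\mathcal{C}/\mathcal{M}}$ is smooth over $\mathcal{M}$ of relative dimension $g$, hence of dimension $4g-3$. The locus $\mathcal{W}^r_d$ is cut out inside $\mathrm{Pic}^d_{\mathcal{C}/\mathcal{M}}$ as the degeneracy locus where a morphism $\phi\colon \mathcal{E}\to\mathcal{F}$ of vector bundles --- obtained from a relative complex computing $R\pi_*$ of a Poincar\'e bundle twisted by a sufficiently positive relative divisor --- drops rank by $g-d+r$. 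By the standard bound on the codimension of determinantal loci, namely that every component has codimension at most the expected value $(r+1)(g-d+r)$, each component of $\mathcal{W}^r_d$ has dimension at least $4g-3-(r+1)(g-d+r)=3g-3+\rho(d,g,r)=\lambda(d,g,r)$. Since $\mathcal{G}^r_d\to\mathcal{W}^r_d$ is, over the open locus where $h^0=r+1$, an isomorphism, and is elsewhere realized by a further relative Grassmannian/determinantal construction that does not lower the estimate, the bound $\lambda(d,g,r)$ persists for every component of $\mathcal{G}^r_d$.

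For (2), the key is to identify the open subscheme $\mathcal{U}\subset\mathcal{G}^1_d$ of base-point-free pencils with a $\mathrm{PGL}(2)$-quotient of the Hurwitz scheme. The hypothesis $d\le g+1$ guarantees $h^0(K-L)=g+1-d\ge 0$, so that complete pencils exist in this range and a general base-point-free $g^1_d$ defines a degree-$d$ morphism $\xi^{-1}(p)\to\PP^1$. Forgetting the automorphisms of the target, such covers are parametrized by the Hurwitz scheme $\mathcal{H}_{d,g}$, which is smooth of dimension equal to the number $b=2d+2g-2$ of branch points --- smoothness because simply branched covers are unobstructed and the branch morphism to the configuration space is \'etale onto the discriminant complement --- and irreducible by the classical transitive-monodromy theorem of Clebsch and Hurwitz, for which an algebraic proof is due to Fulton. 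Passing to the free $\mathrm{PGL}(2)$-quotient lowers the dimension by $3$, so $\mathcal{U}$ is smooth and irreducible of dimension $2d+2g-5=\lambda(d,g,1)$.

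It remains to propagate irreducibility and smoothness to all of $\mathcal{G}^1_d$, and this is where I expect the genuine work to sit. A pencil with base points is of the form (a base-point-free $g^1_{d-k}$) plus a fixed effective divisor of degree $k\ge 1$, so these loci have dimension at most $\lambda(d-1,g,1)+1=\lambda(d,g,1)-1$, strictly below $\lambda(d,g,1)$. By part (1), every component of $\mathcal{G}^1_d$ has dimension at least $\lambda(d,g,1)$, so no component can be contained in the base-point locus; hence every component meets the irreducible locus $\mathcal{U}$, which forces $\mathcal{G}^1_d$ to be irreducible of dimension $\lambda(d,g,1)$. Smoothness everywhere then follows from the standard infinitesimal computation identifying the Zariski tangent space with an $\lambda(d,g,1)$-dimensional space at each point (equivalently, vanishing of the relevant obstruction), the delicate case being points lying in the base-point locus. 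This tangent-space analysis, together with the classical irreducibility of $\mathcal{H}_{d,g}$, is the essential input and the main obstacle; both are furnished by the cited references \cite{AC1} and \cite{ACGH2}.
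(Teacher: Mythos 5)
The first thing to note is that the paper contains no proof of this proposition: it is stated as a collection of well-known facts and referred to \cite{AC1}, \cite[Chapt. 21]{ACGH2} and \cite{H1} --- the very sources you fall back on at the end. So your sketch can only be measured against the literature, whose architecture (determinantal loci for (1), Hurwitz spaces for (2)) you do follow; but read as a proof it has genuine gaps. In (1), the passage from $\mathcal{W}^r_d$ to $\mathcal{G}^r_d$ is not a valid deduction. A component of $\mathcal{G}^r_d$ may lie entirely over the deeper stratum where $h^0=r+1+s$ with $s>0$; its image is then some subvariety of $\mathcal{W}^{r+s}_d$, about which your bound on components of $\mathcal{W}^r_d$ says nothing, and adding the fiber dimension $(r+1)s$ of $\GG(r+1,r+1+s)$ does not repair the count: one has $\lambda(d,g,r+s)+(r+1)s=\lambda(d,g,r)-s(g-d+r+s)$, which is strictly smaller than $\lambda(d,g,r)$ whenever $g-d+r+s>0$, i.e.\ throughout the special range. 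The correct (and standard) argument runs the construction directly on $\mathcal{G}^r_d$: after twisting by an auxiliary effective divisor of degree $m\gg 0$, one realizes $\mathcal{G}^r_d$ as the zero locus of a section of a rank-$(r+1)m$ bundle on the Grassmann bundle $\GG(r+1,\mathcal{E})$ over the relative Picard variety, where $\mathcal{E}$ has rank $d+m-g+1$; then \emph{every} component, regardless of which stratum it lies over, has codimension at most $(r+1)m$, which yields $\lambda(d,g,r)$ uniformly. Your parenthetical ``further relative Grassmannian/determinantal construction that does not lower the estimate'' is precisely this argument, but as written it is an assertion rather than a proof, and it is carrying all the weight.

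In (2) there are two soft spots. First, the base-point-free locus $\mathcal{U}$ is not a $\mathrm{PGL}(2)$-quotient of the classical Hurwitz scheme: that scheme parametrizes \emph{simply} branched covers, while $\mathcal{U}$ also contains covers with worse branching. What saves the argument is the branch-point count: a cover with exactly $w$ distinct branch points moves in a $w$-dimensional family (Riemann existence: branch locus plus finite monodromy data), and $w\le 2g+2d-3$ unless all branching is simple, so non-simply-branched pencils --- and, via your induction, base-pointed ones --- fill loci of dimension $<\lambda(d,g,1)$; combined with part (1) this shows every component of $\mathcal{G}^1_d$ contains the simply branched locus densely, and only then does Clebsch--Fulton irreducibility of the Hurwitz space conclude. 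You need this intermediate step. Second, and more seriously, smoothness of $\mathcal{G}^1_d$ is not ``the standard infinitesimal computation'': the naive obstruction space does not vanish in general --- already for a hyperelliptic pencil $L=g^1_2$ one has $H^1(C,L^{\otimes 2})^{\vee}\cong H^0(K_C\otimes L^{-2})\ne 0$ for $g\ge 3$, and hyperelliptic and trigonal curves violate Petri --- yet $\mathcal{G}^1_d$ is smooth at such points. That unobstructedness is exactly the nontrivial theorem of Arbarello--Cornalba \cite{AC1}; your proposal ends by citing it, which is legitimate (it is what the paper itself does for the entire proposition), but it should be flagged as the imported theorem rather than presented as a routine tangent-space check. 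Finally, your reading of the hypothesis $d\le g+1$ (``complete pencils exist'') is not its actual function; it simply delimits the range in which the cited statements are established.
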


\vskip 4pt
We will utilize the  following upper bound of the dimension of a certain  irreducible component of $\mathcal{W}^r_d$, which was proved  and
used effectively in \cite{I}.

\begin{prop}[\rm{\cite[Proposition 2.1]{I}}]\label{wrdbd}
Let $d,g$ and $r\ge 2$ be positive integers such that  $d\le g+r-2$ and let $\mathcal{W}$ be an irreducible component of $\mathcal{W}^{r}_{d}$. For a general elment $(p,L)\in \mathcal{W}$, let $b$ be the degree of the base locus of the line bundle $L=|D|$ on $C=\xi^{-1}(p)$. Assume further that for a general $(p,L)\in \mathcal{W}$ the curve $C=\xi^{-1}(p)$ is not hyperelliptic. If the moving part of $L=|D|$ is very ample and $r\ge3$, then
$$\dim \mathcal{W}\le 3d+g+1-5r-2b.$$
\end{prop}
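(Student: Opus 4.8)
The plan is to bound $\dim\mathcal{W}$ from above by converting it into a lower bound on the rank of a Petri map, and then to feed the very ampleness of the moving part into that rank estimate. A general $(p,L)\in\mathcal{W}$ satisfies $h^0(L)=r+1$ (if instead an entire component had $h^0(L)>r+1$, it would lie in $\mathcal{W}^{r+1}_d$ and could be treated by the analogous formula), so the complete series $|L|$ is a point of $\mathcal{G}^r_d$ and $\dim\mathcal{W}$ equals the dimension of the corresponding component of $\mathcal{G}^r_d$. I would then invoke the Arbarello--Cornalba description of the Zariski tangent space, $\dim T_{(p,\mathcal{D})}\mathcal{G}^r_d=\lambda(d,g,r)+\dim\ker\mu_0$, where $K_C$ denotes the canonical bundle and $\mu_0\colon H^0(L)\otimes H^0(K_C-L)\to H^0(K_C)$ is the Petri map. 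Combining this with $\lambda(d,g,r)=3g-3+\rho(d,g,r)$ and with $h^0(L)\,h^0(K_C-L)=(r+1)(g-d+r)=g-\rho(d,g,r)$ yields the clean bound
\[
\dim\mathcal{W}\ \le\ 3g-3+\rho(d,g,r)+\dim\ker\mu_0\ =\ 4g-3-\operatorname{rank}\mu_0 .
\]
Thus the whole proposition is reduced to the single inequality $\operatorname{rank}\mu_0\ge 3(g-d)+5r+2b-4$.

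Next I would transfer this rank estimate onto the moving part, where very ampleness is available. Writing $L=M\otimes\mathcal{O}_C(B)$ with $B$ the base divisor of degree $b$ and $M$ the very ample moving part of degree $d-b$, multiplication by the section cutting out $B$ identifies $H^0(L)$ with $H^0(M)$ and $H^0(K_C-L)$ with $H^0(K_C-M-B)$, and since multiplication by that section is injective on global sections it carries $\mu_0$ onto the multiplication map
\[
\widetilde\mu\colon\ H^0(M)\otimes H^0(K_C-M-B)\ \longrightarrow\ H^0(K_C-B),
\]
so that $\operatorname{rank}\mu_0=\operatorname{rank}\widetilde\mu$. It therefore suffices to bound $\operatorname{rank}\widetilde\mu$ from below by $3(g-d)+5r+2b-4$. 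Here both hypotheses enter decisively: because $M$ is very ample, $C$ sits in $\mathbb{P}^r$ as a non-degenerate smooth curve whose general hyperplane sections lie in uniform position, and because $C$ is non-hyperelliptic the canonical multiplication maps do not collapse as they would when the canonical map factors through a conic. My plan for the lower bound is the base-point-free pencil trick applied to a carefully chosen pencil inside $H^0(M)$, combined with a general-position filtration: peeling off the points of a general hyperplane section one at a time produces a filtration of the source whose successive quotients each contribute independently to the rank, with uniform position guaranteeing that every step behaves as expected.

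The main obstacle is precisely this last estimate for $\operatorname{rank}\widetilde\mu$. The naive base-point-free bound $\operatorname{rank}\widetilde\mu\ge h^0(M)+h^0(K_C-M-B)-1=g-d+2r$ falls far short of the target $3(g-d)+5r+2b-4$ once $r\ge 3$, so very ampleness must be used in an essential, quantitative way rather than as mere base-point-freeness; in particular, controlling how the base divisor $B$ cuts $H^0(K_C-M-B)$ down inside $H^0(K_C-M)$, and keeping the estimate uniform as $b$ grows, is the delicate technical point. Finally I would handle the small-genus configurations, where the uniform-position arguments require extra care, separately, and confirm that components with $h^0(L)>r+1$ do not give anything larger, thereby completing the argument.
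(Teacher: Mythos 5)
The paper itself contains no proof of Proposition \ref{wrdbd}: it is imported wholesale from Iliev \cite[Proposition 2.1]{I}, so your attempt can only be judged against that external argument and on its own merits. Your formal reductions are correct and standard: a component whose general element has $h^0(L)=r'+1>r+1$ is a component of $\mathcal{W}^{r'}_d$ and would satisfy the stronger bound $3d+g+1-5r'-2b$, so one may assume $|L|$ is complete; the Arbarello--Cornalba tangent-space description then gives $\dim\mathcal{W}\le 3g-3+\rho(d,g,r)+\dim\ker\mu_0=4g-3-\operatorname{rank}\mu_0$; and multiplication by the section cutting out the base divisor $B$ does identify $\operatorname{rank}\mu_0$ with $\operatorname{rank}\widetilde\mu$ on the moving part. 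Your arithmetic is also right: the proposition is equivalent to the inequality $\operatorname{rank}\mu_0\ge 3(g-d)+5r+2b-4$.

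But that inequality \emph{is} the proposition --- everything preceding it is bookkeeping --- and you do not prove it; you name tools (base-point-free pencil trick, uniform position) and concede that the estimate is the ``main obstacle'' and ``delicate''. Every substantive hypothesis (very ampleness of the moving part, non-hyperellipticity, $r\ge 3$) is consumed only in this missing step. To gauge its size: in the very case this paper needs ($d=g+2$, $r=4$, $b=0$, hence $h^0(K_C-L)=2$), the target $3(g-d)+5r-4=10$ equals $\dim\left(H^0(L)\otimes H^0(K_C-L)\right)$, so one must prove the Petri map is \emph{injective}; writing $|K_C-L|=F+|N|$ with $N$ the moving pencil, the pencil trick identifies $\ker\mu_0$ with $H^0(2L-K_C+F)$, and the required vanishing of this space for a very ample $L$ on a non-hyperelliptic curve is a genuinely geometric statement for which you offer no mechanism. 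A secondary unresolved point is your treatment of $b$: if the base-point-free case yields $\operatorname{rank}\mu_{0,M}\ge 3(g-\deg M)+5r-4$ on $H^0(M)\otimes H^0(K_C-M)$, your target for $\widetilde\mu$ (the restriction to $H^0(M)\otimes H^0(K_C-M-B)$) is smaller by exactly $b$, yet restricting one tensor factor to a subspace of codimension up to $b$ can a priori drop the rank by as much as $(r+1)b$, not $b$; the natural repair --- which the shape $3(d-b)+g+1-5r+b$ of the bound itself suggests --- is to dispose of base points at the level of families, via $\dim\mathcal{W}\le\dim\mathcal{W}'+b$ with $\mathcal{W}'$ the family of moving parts, and then prove only the base-point-free rank estimate. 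As it stands, your proposal is a correct reduction attached to an unproven core, so it does not yet constitute a proof.
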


\vskip 4pt

For notations and conventions, we usually follow those in \cite{ACGH} and \cite{ACGH2}; e.g. $\pi (d,r)$ is the maximal possible arithmetic genus of an irreducible and non-degenerate curve of degree $d$ in $\PP^r$;
$
\pi(d,r):=\binom{m}{2}(r-1)+m\epsilon
$
where $m=\left[\frac{d-1}{r-1}\right]$ and $d-1=m(r-1)+\epsilon$. 
Throughout we work over the field of complex numbers.
\section{\quad Irreducibility of $\mathcal{H}_{g+2,g,4}$}
We first recall that the  irreducibility of  
$\mathcal{H}_{g+3,g,4}$ for $g\ge 5$ has been shown by Hristo Iliev;  \cite[Theorem 3.2(a)]{I}. We also remark that this result of Hristo Iliev has been stated with the fullest possible generality; note that  $\pi(g+3,4)<g$ for $g\le 4$ and hence $\mathcal{H}_{g+3,g,4}=\emptyset $ if $g\le 4$.

In the same vein, one may easily see that $\mathcal{H}_{g+2,g,4}=\emptyset$ for $g\le 6$; by the Castelnuovo genus bound, one checks that there is no smooth and non-degenerate curve in $\PP^4$ of degree $g+2$ and genus $g$ if $g\le 6$. 
Therefore in conjunction with the theorem of Hristo Iliev \cite[Theorem 3.2(b)]{I}, i.e.  $\mathcal{H}_{g+2,g,4}$ being irreducible for $g\ge 11$, we shall assume $7\le g\le 10$ for the rest of this section. 
 The main result of this article is the following theorem.
 
\begin{thm}\label{H_g+2,g,4}
$\mathcal{H}_{g+2,g,4}$ is irreducible for any $g$ with $7\le g\le 10$.
\end{thm}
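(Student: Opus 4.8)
The plan is to use the standard reduction explained in the excerpt, whereby it suffices to prove that $\widetilde{\mathcal{G}}\subset\mathcal{G}^4_{g+2}$ is irreducible. Since $\rho(g+2,g,4)=g-10$, every component of $\mathcal{G}^4_{g+2}$ has dimension at least $\lambda(g+2,g,4)=4g-13$ by Proposition \ref{facts}(1). The engine of the argument is Serre duality: if $(p,\mathcal{D})\in\widetilde{\mathcal{G}}$ has $\mathcal{D}=|L|$ complete with $h^0(L)=5$, then by Riemann--Roch $h^0(K-L)=h^1(L)=2$, so $H:=K-L$ is a $g^1_{g-4}$, and conversely any $(p,H)\in\mathcal{G}^1_{g-4}$ with $h^0(H)=2$ has $h^0(K-H)=5$ automatically. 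First I would use this to identify the locus $\widetilde{\mathcal{G}}_0\subset\widetilde{\mathcal{G}}$ of pairs with complete very ample series with an open subset of $\mathcal{G}^1_{g-4}$ via $(p,|L|)\mapsto(p,K-L)$. By Proposition \ref{facts}(2), which applies since $g\ge 7$, the scheme $\mathcal{G}^1_{g-4}$ is irreducible of dimension $\lambda(g-4,g,1)=4g-13$, so $\widetilde{\mathcal{G}}_0$ is irreducible of dimension $4g-13$ as soon as it is non-empty.

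Next I would show that every component of $\widetilde{\mathcal{G}}$ coincides with $\overline{\widetilde{\mathcal{G}}_0}$. Because each component has dimension at least $4g-13$, it is enough to prove that the complementary locus, where the very ample series $\mathcal{D}$ is \emph{incomplete} (equivalently $h^0(L)\ge 6$ for the complete bundle $L$), has dimension strictly smaller than $4g-13$. Here $h^0(L)\ge 6$ forces $h^0(K-L)\ge 3$, i.e.\ $C$ carries a $g^2_{g-4}$. I would first observe that a curve carrying a very ample \emph{special} line bundle cannot be hyperelliptic, which both legitimises the use of Proposition \ref{wrdbd} and lets me invoke the Castelnuovo genus bound: for $7\le g\le 9$ a smooth non-hyperelliptic curve has no $g^2_{g-4}$ at all (a plane model of degree $g-4\le 5$ has genus $<g$, and a non-birational such series would force $C$ hyperelliptic), so the incomplete locus is empty. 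The only surviving case is $g=10$, where $K-L$ is a $g^2_6$ and hence $C$ is a smooth plane sextic; such curves form a $19$-dimensional family, and the choice of a projected $g^4\subset|L|\cong\PP^5$ contributes $5$ further parameters, giving dimension $24<27=4g-13$. (Alternatively one bounds $\dim\mathcal{W}^5_{g+2}$ directly via Proposition \ref{wrdbd}.) Thus the general member of every component has complete series, whence every component equals $\overline{\widetilde{\mathcal{G}}_0}$ and $\widetilde{\mathcal{G}}$ is irreducible.

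It remains to establish non-emptiness, i.e.\ that $\widetilde{\mathcal{G}}_0\ne\emptyset$; this simultaneously yields the non-emptiness of $\mathcal{H}_{g+2,g,4}$. I would take a general $(C,H)\in\mathcal{G}^1_{g-4}$ and verify that $L=K-H$ is very ample. By the computation above, $L$ fails to separate (possibly infinitely near) points $x,y$ exactly when $h^0(H+x+y)=3$, i.e.\ when $H+x+y$ is a $g^2_{g-2}$, so very ampleness is equivalent to the non-existence of such a net containing $H$. For $g=10$ this is immediate: since $\rho(8,10,2)=-2<0$, a general curve of genus $10$ carries no $g^2_8$ whatsoever, so the general $g^1_6$ already has a very ample residual. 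For $7\le g\le 9$, where $\mathcal{G}^1_{g-4}$ lies over a Brill--Noether special sublocus of $\mathcal{M}_g$, I would rule out the bad nets either by a dimension count on the incidence variety $\{(C,H,x,y):h^0(H+x+y)\ge 3\}$ over $\mathcal{G}^1_{g-4}$, or by exhibiting explicit smooth curves of degree $g+2$ and genus $g$ in $\PP^4$ (for instance by liaison or on a suitable surface).

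The hard part is this last step. The dimension estimate ruling out extra components is essentially forced once one recognises the only competitor as the plane-sextic locus in genus $10$; but verifying very ampleness of $K-H$ --- equivalently non-emptiness --- is delicate for $g=7,8,9$, precisely because the residual pencil lives on a locus where low-degree special linear series are abundant, so the absence of a net $H+x+y$ must be checked rather than read off from Brill--Noether genericity. I expect this case-by-case very-ampleness and non-emptiness analysis, together with the clean reduction to the irreducible scheme $\mathcal{G}^1_{g-4}$, to be the crux of the proof.
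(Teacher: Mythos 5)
Your overall strategy is essentially the paper's own: reduce to $\widetilde{\mathcal{G}}$, identify complete very ample $g^4_{g+2}$'s with complete pencils $g^1_{g-4}$ via Serre duality, quote the irreducibility of $\mathcal{G}^1_{g-4}$ (Proposition \ref{facts}(2)), and eliminate components whose general member is incomplete by Clifford-type arguments for $g=7,8,9$ and a dimension count for $g=10$. The genuine gap is in the $g=10$ count: it is \emph{not} true that a base-point-free $g^2_6$ on a non-hyperelliptic curve of genus $10$ forces $C$ to be a smooth plane sextic. The map to $\PP^2$ may be non-birational: it can have degree $3$ onto a conic, in which case $C$ is trigonal and $g^2_6=2g^1_3$ --- precisely the case the paper treats, and the dominant one, since $\dim\mathcal{M}^1_{10,3}=2g+1=21>19$ --- or degree $2$ onto a smooth plane cubic, in which case $C$ is bielliptic and the $g^2_6$ is pulled back from a $g^2_3$ on the elliptic curve (a case the paper overlooks as well; that family has dimension $18+1=19$). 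Your conclusion survives, because $21+5=26$ and $19+5=24$ are still $<27=4g-13$, but the binding constraint is the trigonal locus, not the plane sextics, so your enumeration as written is wrong even though repairable. Note also that your parenthetical fallback --- bounding $\dim\mathcal{W}^5_{12}$ by Proposition \ref{wrdbd} alone --- does not suffice: that proposition gives $\dim\mathcal{W}\le 3(g+2)+g+1-25=22$, and $22+5=27$ is not $<27$; one still needs the classification of curves carrying a $g^2_6$ to push the bound below $22$, which is exactly how the paper derives its contradiction.

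On non-emptiness: you correctly isolate, for $g=7,8,9$, the verification that $K-H$ is very ample for a general $(C,H)\in\mathcal{G}^1_{g-4}$ as the delicate point, but you leave it open. Two remarks. First, it is not needed for the Theorem as stated: the paper's proof of the Theorem likewise establishes only that $\widetilde{\mathcal{G}}$ has at most one component, deferring non-emptiness to Remark \ref{finalremark}, and Corollary \ref{cor} is phrased for non-empty Hilbert schemes. Second, the paper settles the point not by incidence-variety counts or explicit constructions but by citing the result of \cite{B} and \cite{KK2} on the Clifford index of a general $(e+2)$-gonal curve: with $e=g-6$, the hypothesis $g\ge 2e+2$ is exactly $g\le 10$, and the conclusion that the Clifford index is computed only by the gonality pencil rules out any $g^2_{g-2}$, which is precisely the very ampleness of the residual series. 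Your $g=10$ Brill--Noether argument ($\rho(8,10,2)=-2$) is fine, but for $g\le 9$ you should quote (or reprove) that Clifford-index result; as written, this part of your proposal is a plan rather than a proof.
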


Consequently Theorem \ref{H_g+2,g,4} together with a result of Hristo Iliev \cite[Theorem 3.2(b)]{I} readily imply the following statement.
\begin{cor}\label{cor} Any non-empty $\mathcal{H}_{g+2,g,4}$ is irreducible.
 \end{cor}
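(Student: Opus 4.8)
Since $\mathcal{H}_{g+2,g,4}$ is empty for $g\le 6$ by the Castelnuovo bound and irreducible for $g\ge 11$ by \cite[Theorem 3.2(b)]{I}, Corollary \ref{cor} reduces at once to Theorem \ref{H_g+2,g,4}, so I would fix $7\le g\le 10$ and, as in the set-up, prove instead that $\widetilde{\mathcal{G}}\subseteq\mathcal{G}^{4}_{g+2}$ is non-empty and irreducible; the Hilbert scheme is a $\PP\mathrm{GL}(5)$-bundle over an open subset of it.

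The mechanism I would use is residuation. If $(p,L)$ carries a $g^{4}_{g+2}$, then $h^{0}(L)\ge 5$ and Riemann--Roch gives $h^{0}(K-L)=h^{0}(L)-3\ge 2$, so $M:=K-L$ has degree $g-4$ and the rule $L\mapsto K-L$ identifies $\mathcal{W}^{4+k}_{g+2}$ with $\mathcal{W}^{1+k}_{g-4}$ for every $k\ge 0$. First I would show that every component of $\widetilde{\mathcal{G}}$ has dimension exactly $\lambda(g+2,g,4)=4g-13$: the lower bound is Proposition \ref{facts}(1), while for the upper bound I take a component whose general member $(p,(L,V))$ has $h^{0}(L)=5+k$, project it to $\mathcal{W}^{4+k}_{g+2}$ with fibre the Grassmannian of $5$-dimensional subspaces of $H^{0}(L)$, of dimension $5k$, and apply Proposition \ref{wrdbd} with $r=4+k$. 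Since $V$ is very ample the moving part of $L$ is very ample with $b=0$ and $C$ is non-hyperelliptic, so the hypotheses hold and $\dim\mathcal{W}^{4+k}_{g+2}\le 3(g+2)+g+1-5(4+k)=4g-13-5k$, whence $\dim\le(4g-13-5k)+5k=4g-13$.

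Next I would control $h^{0}(L)$. A very ample $g^{4}_{g+2}$ with $h^{0}(L)\ge 6$ would present $C$ as a non-degenerate curve of degree $g+2$ and genus $g$ in $\PP^{5}$; as $\pi(g+2,5)<g$ for $g\le 9$, this is impossible, so $h^{0}(L)=5$ identically when $g\in\{7,8,9\}$ and every member of $\widetilde{\mathcal{G}}$ is a complete series. For these $g$ residuation is then a bijection between $\widetilde{\mathcal{G}}$ and the locus of pairs $(C,A)\in\mathcal{G}^{1}_{g-4}$ for which $K-A$ is very ample; since $g-4\le g+1$, Proposition \ref{facts}(2) makes $\mathcal{G}^{1}_{g-4}$ irreducible of dimension $4g-13$, and very ampleness of $K-A$ is the open condition $h^{0}(A+p+q)=h^{0}(A)$ for all $p,q$, i.e. the absence of a $g^{2}_{g-2}$ containing $A$. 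One expects this for a general such pair because $\rho(g-2,g,2)=g-12<0$; but the curves carrying a $g^{1}_{g-4}$ are themselves special, being $(g-4)$-gonal with $g-4$ below the generic gonality when $g\le 9$, so the very ampleness has to be confirmed directly from the scrollar geometry of these curves. Granting it, $\widetilde{\mathcal{G}}$ is the closure of the image of an irreducible variety, and Theorem \ref{H_g+2,g,4} follows for $g\in\{7,8,9\}$.

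The genuinely hard case is $g=10$, where $\pi(12,5)=10=g$: now a second stratum with $h^{0}(L)=6$ and $C\subset\PP^{5}$ an extremal Castelnuovo curve is not excluded, and the crude count above even permits it to attain the full dimension $27=4g-13$, so it could a priori be a spurious extra component. Under residuation this stratum becomes the family of complete nets $g^{2}_{6}$, whose members lie on the surfaces of minimal degree in $\PP^{5}$ carrying the Castelnuovo curves, and the plan is to exploit this scroll structure to prove either that the stratum has dimension strictly below $27$ or that every projection to $\PP^{4}$ of such an extremal curve is a flat limit of curves with $h^{0}(L)=5$, hence lies in the closure of the complete locus. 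Establishing this containment is the principal obstacle; once it is in hand $\widetilde{\mathcal{G}}$ is irreducible for $g=10$ as well, Theorem \ref{H_g+2,g,4} is proved, and Corollary \ref{cor} follows by combining it with \cite[Theorem 3.2(b)]{I} and the emptiness for $g\le 6$.
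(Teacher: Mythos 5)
Your reduction to $7\le g\le 10$, your proof that every component of $\widetilde{\mathcal{G}}\subset\mathcal{G}^4_{g+2}$ has dimension exactly $4g-13$ (Proposition \ref{facts}(1) squeezed against Proposition \ref{wrdbd} through the Grassmannian fibration), and your treatment of $g\in\{7,8,9\}$ are sound and essentially reproduce the paper's Lemma \ref{dualbirva} and the proof of Theorem \ref{H_g+2,g,4}, with the Castelnuovo bound $\pi(g+2,5)<g$ replacing the paper's Clifford-theorem arguments -- a legitimate variant. Note also that your hesitation about verifying very ampleness of $K-A$ for a \emph{general} $(C,A)\in\mathcal{G}^1_{g-4}$ (``granting it'') costs nothing for the statement at hand: very ampleness of the residual is an open condition on $\mathcal{G}^1_{g-4}$, so the locus $U$ where it holds is open in an irreducible variety; since the corollary \emph{assumes} $\mathcal{H}_{g+2,g,4}\neq\emptyset$, we get $U\neq\emptyset$, hence $U$ is dense and irreducible, which is all the residuation argument needs. (The scrollar/Clifford-index verification is required only for the separate non-emptiness claim, which the paper isolates in Remark \ref{finalremark} using \cite{B}, \cite{KK2}.)

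The genuine gap is $g=10$, which you explicitly leave as a ``plan'': you must exclude a component $\mathcal{G}$ of $\widetilde{\mathcal{G}}$ whose general member is an \emph{incomplete} very ample $g^4_{12}$ with $h^0(L)=6$, and you never do. The missing ingredients are a classification and two dimension counts. If such a component existed, the equality chain $27=4g-13\le\dim\mathcal{G}\le\dim\mathcal{W}+\dim\GG(4,5)\le(3(g+2)+g+1-25)+5=27$ forces $\dim\mathcal{W}=4g+7-25=22$ exactly, where $\mathcal{W}\subset\mathcal{W}^5_{12}$ is the component containing the images of the complete series $|L|$. Residuation sends $\mathcal{W}$ birationally to a family $\mathcal{W}^\vee$ of $g^2_6$'s on genus-$10$ curves. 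This $g^2_6$ is base-point-free (a base point yields a $g^2_5$, forcing $C$ hyperelliptic or of genus $\le 6$), so $C$ is either trigonal with $g^2_6=2g^1_3$, or a smooth plane sextic; in the first case the trigonal pencil (hence the $g^2_6$) is unique on $C$, and in the second the very ample $g^2_6$ is unique. Therefore $\mathcal{W}^\vee$ injects (generically) into either the trigonal locus $\mathcal{M}^1_{10,3}$, of dimension $2g+1=21$, or the locus of smooth plane sextics, of dimension $27-8=19$; either way $\dim\mathcal{W}=\dim\mathcal{W}^\vee\le 21<22$, a contradiction. Equivalently: the stratum of $g^4_{12}$'s contained in some $g^5_{12}$ has dimension at most $21+5=26<27$, so it cannot dominate any component. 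This is exactly the first horn of your dichotomy, carried out; your second horn (showing every projected extremal Castelnuovo curve is a flat limit of linearly normal ones) is unnecessary and would be substantially harder. As written, your proposal establishes the corollary only for $g\neq 10$.
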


\begin{rmk}
{\rm It is worthwhile to note that the genus range $g\ge 11$ in \cite[Theorem 3.2(b)]{I} is exactly the range where the Brill-Noether number $\rho (g+2,g,4)$ is strictly positive so that there exists a unique component of the Hilbert scheme  $\mathcal{H}_{g+2,g,4}$ dominating the moduli space $\mathcal{M}_g$.
For curves of genus $g$ in the range $7\le g\le 10$ -- in which case $\rho (g+2,g,4)\le 0$ -- we will see in Remark \ref{finalremark} that $\mathcal{H}_{g+2,g,4}\neq\emptyset$ and the unique component of $\mathcal{H}_{g+2,g,4}$ is indeed the component which dominates the irreducible locus $\mathcal{M}^1_{g, g-4}$ in $\mathcal{M}_g$  consisting of $(g-4)$-gonal curves.}

\end{rmk}

The following lemma,
which is an intermediate step toward the proof of the irreducibility of $\mathcal{H}_{g+2,g,4}$, asserts that a general element in any component of $\mathcal{H}_{g+2,g,4}$ corresponds to a smooth curve in $\mathbb{P}^4$ which is linearly normal.

\begin{lem}\label{dualbirva}
Let ${\mathcal{G}}\subset \mathcal{G}^{4}_{g+2}$ be an irreducible component whose general element $(p, \mathcal{D})$ is a very ample linear series $\mathcal{D}$ on the curve $C=\xi^{-1}(p)$ and assume $7\le g\le 10$. Then 
\begin{enumerate}
\item[\rm{(1)}] $\mathcal{D}$ is complete and $\dim{\mathcal{G}}=4g-13$.
\item[\rm{(2)}] a general element of the component ${\mathcal{W}}^{\vee}\subset \mathcal{W}^1_{g-4}$ consisting of the residual  series (with respect to the canonical series on the corresponding curve) of those elements in  ${\mathcal{G}}$ is a complete pencil.
\end{enumerate}
\end{lem}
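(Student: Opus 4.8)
The plan is to prove the two assertions in tandem, using the dimension bound of Proposition \ref{wrdbd} together with the lower bound $\lambda(d,g,r)$ from Proposition \ref{facts} to pin down $\dim\mathcal{G}$ exactly and to force completeness. Let me work out the relevant numbers for $d=g+2$, $r=4$. First I would compute the Brill--Noether number $\rho(g+2,g,4)=g-5(g-(g+2)+4)=g-5\cdot 2=g-10$, so that $\lambda(g+2,g,4)=3g-3+\rho=4g-13$. Thus Proposition \ref{facts}(1) already gives the inequality $\dim\mathcal{G}\ge 4g-13$, and the target of part (1) is to establish the reverse inequality.

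\medskip

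\emph{Strategy for part (1).} Since $d=g+2\le g+r-2=g+2$, the hypotheses of Proposition \ref{wrdbd} are exactly in force (we are at the boundary of the allowed range). For a general $(p,\mathcal{D})\in\mathcal{G}$ with $\mathcal{D}=(L,V)$ very ample, let $L=|D|$ be the complete linear series containing $\mathcal{D}$ and let $\mathcal{W}\subset\mathcal{W}^r_d$ be the component dominated by $\mathcal{G}$. I would first argue that a general curve $C=\xi^{-1}(p)$ parametrized here is non-hyperelliptic: a hyperelliptic curve carries no very ample $g^4_{g+2}$ for $g$ in our range (the Castelnuovo-type bounds, or directly the structure of linear series on hyperelliptic curves, rule this out), so the hypothesis of Proposition \ref{wrdbd} is met. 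Because $\mathcal{D}$ is very ample and hence base-point-free, the base locus degree is $b=0$, and the moving part is $L$ itself and is very ample. Proposition \ref{wrdbd} then yields
\[
\dim\mathcal{W}\le 3d+g+1-5r-2b=3(g+2)+g+1-20=4g-13.
\]
The fibres of the forgetful map $\mathcal{G}\to\mathcal{W}$ over $(p,L)$ are the Grassmannians $\mathbb{G}(r,H^0(L))$ of $(r+1)$-dimensional subspaces of $H^0(L)$; this fibre is a single point precisely when $h^0(L)=r+1$, i.e. when $\mathcal{D}$ is complete, and has positive dimension otherwise. Combining $\dim\mathcal{G}\ge 4g-13$ with $\dim\mathcal{G}\le\dim\mathcal{W}+\dim(\text{fibre})$ and $\dim\mathcal{W}\le 4g-13$ forces the fibre dimension to be $0$, whence $h^0(L)=r+1=5$, so $\mathcal{D}=L$ is complete; and simultaneously all inequalities become equalities, giving $\dim\mathcal{G}=4g-13$. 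This is the crux of part (1), and I expect the verification that the general curve is non-hyperelliptic (so that Proposition \ref{wrdbd} applies) to be the only genuinely delicate point.

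\medskip

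\emph{Strategy for part (2).} With part (1) in hand, a general $\mathcal{D}\in\mathcal{G}$ is a complete very ample $g^4_{g+2}$, i.e. $L$ with $h^0(L)=5$ and $\deg L=g+2$. I would pass to the residual series via Serre duality: the residual of $L$ with respect to the canonical bundle is $K_C\otimes L^{-1}=:M$, of degree $2g-2-(g+2)=g-4$, and by Riemann--Roch $h^0(M)=h^0(L)-\deg L+g-1=5-(g+2)+g-1=2$. Hence $M$ is a (complete) $g^1_{g-4}$, showing that the residual series is a complete pencil of the asserted degree $g-4$; this identifies the component $\mathcal{W}^\vee\subset\mathcal{W}^1_{g-4}$. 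The map $L\mapsto K_C\otimes L^{-1}$ is an isomorphism on the level of Picard varieties and is compatible with the family structure over $\mathcal{M}$, so it carries the general point of $\mathcal{G}$ to the general point of a well-defined component $\mathcal{W}^\vee$ of $\mathcal{W}^1_{g-4}$, and the completeness ($h^0(M)=2$) is exactly the statement that this general pencil is complete. The main work here is purely Riemann--Roch bookkeeping once part (1) supplies the completeness of $L$.
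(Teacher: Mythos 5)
Your part (2) and your identification of $\lambda(g+2,g,4)=4g-13$ match the paper, but the crux of part (1) — completeness of $\mathcal{D}$ — is exactly where your argument has a genuine gap. You claim that combining $\dim\mathcal{G}\ge 4g-13$, $\dim\mathcal{G}\le\dim\mathcal{W}+\dim(\text{fibre})$, and $\dim\mathcal{W}\le 4g-13$ ``forces the fibre dimension to be $0$.'' It does not: from $A\ge c$, $A\le B+F$ and $B\le c$ one can only deduce $F\ge A-B\ge 0$, never $F=0$; the fibre dimension enters your chain of inequalities with the wrong sign to yield any contradiction. Moreover, the failure is structural, not reparable by rearranging: if the general $\mathcal{D}$ were incomplete with $r:=h^0(L)-1\ge 5$, the correct way to run the count (and what the paper does) is to apply Proposition \ref{wrdbd} to the component $\mathcal{W}\subset\mathcal{W}^{r}_{g+2}$ with that $r$, giving $\dim\mathcal{W}\le 4g+7-5r$, while the Grassmannian fibre $\mathbb{G}(4,r)$ contributes $+5(r-4)$. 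The terms in $r$ cancel exactly, so one obtains $\dim\mathcal{G}\le 4g-13$ for \emph{every} $r\ge 4$: the dimension count pins down $\dim\mathcal{G}=4g-13$ (so that half of (1) is salvageable once you use the true $r$ rather than $r=4$), but it is incapable in principle of detecting whether $\mathcal{D}$ is complete.

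This is precisely why the paper, after the same dimension computation, spends the entire remainder of its proof ruling out $r\ge 5$ case by case for $7\le g\le 10$, by examining the residual series in $\mathcal{W}^{r-3}_{g-4}$: for $g=7$ a $g^2_3$ contradicts Clifford's theorem; for $g=8$ a $g^2_4$ forces hyperellipticity, incompatible with a very ample $g^4_{g+2}$; for $g=9$ a $g^2_5$ must have a base point (a plane quintic has genus $6\neq 9$), again forcing hyperellipticity; and for $g=10$ one must eliminate a base-point-free $g^2_6$ (the trigonal and smooth-plane-sextic cases) by comparing $\dim\mathcal{W}=4g+7-5r=22$ — the equality produced by the dimension count — against the dimensions $21$ and $19$ of the trigonal locus and the plane-sextic locus in moduli. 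None of this geometric input, which is the actual content of the lemma, appears in your proposal, and it cannot be replaced by arithmetic. Incidentally, the point you flagged as the only delicate one (non-hyperellipticity, needed to invoke Proposition \ref{wrdbd}) is in fact the easy part: here $L$ is special, since $h^1(L)=h^0(L)-3\ge 2$, and a hyperelliptic curve carries no very ample special complete linear series.
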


\begin{proof}
\quad By Proposition \ref{facts}, we have
\[
\lambda (g+2,g,4)=3g-3+\rho(g+2,g,4)=4g-13\le \dim {\mathcal{G}}.
\]
We set $r:=h^0(C, |\mathcal{D}|)-1$ for a general $(p,\mathcal{D})\in{\mathcal{G}}$.
Let $\mathcal{W}\subset \mathcal{W}^{r}_{g+2}$ be the component containing the image of the natural rational map 
${\mathcal{G}}\overset{\iota}{\dashrightarrow} \mathcal{W}^{r}_{g+2}$ with $\iota (\mathcal{D})=|\mathcal{D}|$.
Since $\dim{\mathcal{G}}\le\dim \mathcal{W} +\dim {\mathbb{G}}(4,r)$, it follows  by Proposition  \ref{wrdbd} that 
\[
\lambda (g+2,g,4)=4g-13\le \dim \mathcal{G}\le (4g+7-5r)+5(r-4)=4g-13,
\]
 hence 
\begin{equation}\label{dimension} 
\dim \mathcal{G}=4g-13 ~~~~~~\hskip 12pt \text{and} ~~~~ \hskip 12pt\dim \mathcal{W}=4g+7-5r.
\end{equation}
\vskip 6pt
Let $\mathcal{W}^\vee\subset \mathcal{W}^{r-3}_{g-4}$ be the locus consisting  of the residual  series (with respect to the canonical series on the corresponding curve) of those elements in $\mathcal{W}$, i.e. $\mathcal{W}^\vee =\{(p, \omega_C\otimes L^{-1}): (p, L)\in\mathcal{W}\}.$

Assume that $r\ge 5$ and we will argue by contradiction for each $g$ with 
$7\le g\le 10$. 

\begin{enumerate}
\item[(a)] $g=7$: We have $g^2_3$ on $C=\xi^{-1}(p)$ corresponding to an element in $\mathcal{W}^\vee \subset\mathcal{W}^{r-3}_{g-4}$, contradicting Clifford's theorem. 
\item[(b)] $g=8$: We have  $g^2_4$ on $C=\xi^{-1}(p)$ corresponding to an element in $\mathcal{W}^\vee\subset\mathcal{W}^{r-3}_{g-4}$, hence $C$ is hyperelliptic by Clifford's theorem. However a hyperelliptic curve of genus $g$ cannot have a very ample $g^4_{g+2}$. 
\item[(c)] $g=9$: We have $g^2_5$ on $C=\xi^{-1}(p)$ and since $C$ is not a plane quintic, our $g^2_5$ has a base point and hence $C$ is a hyperelliptic curve, which  cannot have a very ample $g^4_{g+2}$. 
\item[(d)]  $g=10$: In this case, we have either a  $g^2_6$ (when $r=5$) or a $g^3_6$ (when $r=6$) on $C=\xi^{-1}(p)$ corresponding to an element in $\mathcal{W}^\vee\subset\mathcal{W}^{r-3}_{g-4}$. If there were a $g^3_6$, $C$ is an hyperelliptic curve on which there does not exist  a very ample $g^4_{g+2}$. Therefore $r=5$. Note that our $g^2_6$ is  base-point-free, for otherwise the same reasoning as in (c) applies. Therefore it follows that  $C$ is either trigonal with a unique trigonal pencil $g^1_3$ so that $2g^1_3=g^2_6$ or a smooth plane sextic with $|K_C-g^2_6|=|2g^2_6|=g^5_{12}$, where $g^2_6$ is the unique very ample net of degree $6$. Suppose that $C$ is a smooth plane sextic. We recall that  two smooth plane curves of the same degree $d\ge 4$ are isomorphic if and only if they are projectively equivalent. Hence the family of smooth plane curves of degree $d=6$ moves in 
$\dim\mathbb{P}H^0(\mathbb{P}^2, \mathcal{O}(d))-\dim\mathbb{P}\text{GL}(3)=\frac{d(d+3)}{2}-\dim\mathbb{P}\text{GL}(3)=27-8=19$
dimensional  locus $\widetilde{\mathcal{M}}$ in $\mathcal{M}_{g}.$  
In the sequence of natural rational maps 
$\mathcal{G}\overset{\iota}\dashrightarrow
\mathcal{W}\overset{\zeta}\dashrightarrow\mathcal{W}^\vee\overset{\eta}\dashrightarrow\widetilde{\mathcal{M}}\subset\mathcal{M}_{g}$, we note that the rational map $\zeta$ -- which takes a complete linear series to its residual series -- is clearly birational and the projection map $\eta$ is also birational by the  uniqueness of $g^2_6$. Therefore we have  $\dim\mathcal{W}=\dim\mathcal{W}^\vee=\dim\widetilde{\mathcal{M}}=19$, contradicting (\ref{dimension}). If $C$ is trigonal with $g^2_6=2g^1_3$, we also have a sequence of rational maps 
$\mathcal{G}\overset{\iota}\dashrightarrow
\mathcal{W}\overset{\zeta}\dashrightarrow\mathcal{W}^\vee\overset{\eta}\dashrightarrow\mathcal{M}^1_{g,3}\subset\mathcal{M}_{g}$, where $\mathcal{M}^1_{g,3}$ is the irreducible locus of trigonal curves of genus $g$. 
Again the projection map $\eta$ is birational since there exists a unique trigonal pencil on any trigonal curve of genus $g$ when $g\ge 5$.  Hence $\dim\mathcal{W}=\dim\mathcal{W}^\vee=\dim{\mathcal{M}^1_{g,3}}=2g+1=21$, again contradicting (\ref{dimension}).
\end{enumerate}

\vskip 6pt
\noindent
Therefore it finally follows that $r=4$ and by (\ref{dimension}), we have
\begin{equation}\label{equal}\dim\mathcal{G}=\dim\mathcal{W}=\dim\mathcal{W}^{\vee}=4g-13.
\end{equation}
The second statement (2) is obvious from (1). 
\end{proof}

\vskip 6pt
The irreducibility of $\mathcal{H}_{g+2,g,4}$ follows easily as an
immediate consequence of Lemma \ref{dualbirva} together with Proposition \ref{facts}(2).
\vskip 6pt
\noindent
{\it Proof of Theorem \ref{H_g+2,g,4}.}
Retaining the same notations as before, let $\widetilde{\mathcal{G}}$ be the union of irreducible components $\mathcal{G}$ of $\mathcal{G}^{4}_{g+2}$ whose general element corresponds to a pair $(p,\mathcal{D})$ such that $\mathcal{D}$ is very ample linear series on $C:=\xi^{-1}(p)$. Let $\widetilde{\mathcal{W}}^\vee$ be the union of the  components $\mathcal{W}^\vee$ of $\mathcal{W}^1_{g-4}$, where $\mathcal{W}^\vee$ consists of the residual series of elements in a  component $\mathcal{G}$ of $\tilde{\mathcal{G}}$.
By Lemma \ref{dualbirva} (or (\ref{equal})), \begin{equation}\label{dominant}\dim \mathcal{W}^{\vee}=\dim\mathcal{G}=4g-13=\lambda (g-4,g,1)=\dim \mathcal{G}^1_{g-4}.\end{equation}
Since a general element of any component $\mathcal{W}^{\vee}\subset\widetilde{\mathcal{W}}^\vee\subset\mathcal{W}^1_{g-4}$ is 
a complete pencil
by Lemma \ref{dualbirva}, there is a natural rational map $\widetilde{\mathcal{W}}^\vee\overset{\kappa}{\dashrightarrow}\mathcal{G}^1_{g-4}$ with $\kappa(|\mathcal{D}|)=\mathcal{D}$   which is clearly injective on an open subset  $\widetilde{\mathcal{W}}^{\vee o}$ of $\widetilde{\mathcal{W}}^\vee$ consisting of those which are
complete pencils. Therefore the  rational map $\kappa$ is dominant by (\ref{dominant}). 
We also note that there is another natural rational map 
$\mathcal{G}^1_{g-4} \overset{\iota}{\dashrightarrow}\widetilde{\mathcal{W}}{^\vee}$ with $\iota (\mathcal{D})=|\mathcal{D|}$, which is an inverse to $\kappa$ (wherever it is defined).
Therefore it follows that $\widetilde{\mathcal{W}}^{\vee}$ is birationally equivalent to  the irreducible locus $\mathcal{G}^1_{g-4}$, hence $\widetilde{\mathcal{W}}^{\vee}$ is irreducible and so is $\widetilde{\mathcal{G}}$. Since $\mathcal{H}_{g+2,g,4}$ is a $\mathbb{P}\textrm{GL}(5)$-bundle over an open subset of $\widetilde{\mathcal{G}}$,  $\mathcal{H}_{g+2,g,4}$ is irreducible.
\qed

\vskip 6pt
\begin{rmk} \label{finalremark} {\rm We finally remark that $\mathcal{H}_{g+2,g,4}\neq\emptyset$
for $7\le g\le 10$. As was suggested by the proof of Theorem \ref{H_g+2,g,4}  one may argue  that $\mathcal{H}_{g+2,g,4}$ dominates (and is a $\mathbb{P}\textrm{GL}(5)$)-bundle over) the irreducible locus $\mathcal{M}^1_{g, g-4}$ consisting of $(g-4)$-gonal curves  as follows. Recall that the Clifford index $e$ of  a general $(e+2)$-gonal curve of genus $g\ge 2e+2$ can only be computed by the unique pencil computing the gonality as long as $e\neq 0$, i.e. there does not exist a $g^r_{2r+e}$ with $2r+e\le g-1$, $r\ge 2$; cf. \cite[Theorem]{B} or \cite[Corollary 1]{KK2}. Therefore on a general $(g-4)$-gonal curve $C$, the residual series of the unique $g^1_{g-4}$ is a very ample $g^4_{g+2}$; for otherwise there exists a $g^2_{g-2}=g^1_{g-4}\otimes\mathcal{O}_C(p+q)$ for some $p,q\in C$, computing the Clifford index of a general $(g-4)$-gonal curve contradicting the result just mentioned.  }
\end{rmk}
\vskip 12pt


\bibliographystyle{spmpsci} 

\begin{thebibliography}{111}
\bibitem{AC1}
{E. Arbarello, M. Cornalba},
\textit{Su una congetura di Petri.} Comment. Math. Helv., \textbf{56}
(1981), 1--38.
\bibitem{AC2}
{E. Arbarello and M. Cornalba},
\textit{A few remarks about the variety of irreducible plane curves of given degree and genus.} Ann. Sei. \'Ecole Norm. Sup. (4) \textbf{16} (1983), 467--483.
\bibitem{ACGH}
{E. Arbarello, M. Cornalba, P. Griffiths, and J. Harris},
\textit{Geometry of Algebraic Curves Vol.I.}
Springer-Verlag, Berlin/Heidelberg/New York/Tokyo, 1985.
\bibitem{ACGH2}
{E. Arbarello, M. Cornalba, P. Griffiths, and J. Harris},
\textit{Geometry of Algebraic Curves Vol.II.}
Springer-Verlag, Berlin/Heidelberg/New York/Tokyo, 2011.
\bibitem{B}
{E. Ballico},
\textit{On the Clifford index of algebraic curves.} Proc. Amer. Math. Soc., \textbf{97} (1986), 217--218.

\bibitem{E1}
{L. Ein},
\textit{Hilbert scheme of smooth space curves}.
Ann. Scient. Ec. Norm. Sup. (4), \textbf{19} (1986), no. 4, 469--478.
\bibitem{E2}
{L. Ein}. 
\textit{The irreducibility of the Hilbert scheme of complex space curves}, in Algebraic geometry, Bowdoin, 1985, Proc. Sympos. Pure Math., 46, Part 1, 83-87,  Amer. Math. Soc., 1987.
\bibitem{H1}
{J. Harris},
\textit{Curves in Projective space.}
in ``Sem. Math. Sup.", Press Univ. Montr\'eal, Montr\'eal, 1982.
\bibitem{I}
{H. Iliev},
\textit{On the irreducibility of the Hilbert scheme of space curves.} Proc. Amer. Math. Soc., \textbf{134} (2006), no. 10, 2823--2832.
\bibitem{KK1}
{C. Keem and Y. Kim},
\textit{ Irreducibility of the Hilbert scheme of smooth curves in $\mathbb P^3$ of degree $g$ and genus $g$.} 
Archiv der Mathematik, to appear.
\bibitem{KK}
{C. Keem and S. Kim},
\textit{Irreducibility of a subscheme of the Hilbert scheme of complex space curves.} 
J. Algebra, \textbf{145} (1992), no. 1, 240--248.
\bibitem{KK2}
{C. Keem and S. Kim},
\textit{On the Clifford index of a general $(e+2)$-gonal curve.} Manuscripta Math., \textbf{63} (1989), 83--88.
\bibitem{KKL}
{C. Keem, Y. Kim and A.F. Lopez},
\textit{Irreducibility and components rigid in moduli of the Hilbert Scheme of smooth curves.}
Preprint, {\tt arXiv:1605.00297 [math.AG]}, available at
\textrm{https://arxiv.org/abs/1605.00297}
\bibitem{Sev}
{F.  Severi}, 
\textit{Vorlesungen uber algebraische Geometrie.}
Teubner, Leipzig, 1921.
\end{thebibliography}

\end{document}